\newtheorem{thm}{Theorem}[section] 
\newtheorem{cor}[thm]{Corollary}
\newtheorem{lem}[thm]{Lemma}
\newtheorem{prop}[thm]{Proposition}
\newtheorem{rem}[thm]{Remark}
\newcommand\operA[2]{{\if!#2!\operatorname{#1}\else{\operatorname{#1}_{#2}^{\phantom{I}}}\fi}} 
\def\norm{{\operatorname{N}}}
\newcommand{\Trace}[1][]{\if!#1!\operatorname{Tr}\else{\operatorname{Tr}_{#1}^{\phantom{I}}}\fi} 
\newif\ifXY 
\begin{document}
\title{Pure imaginary roots of quaternion standard polynomials}
\author{Adam Chapman}
\thanks{The author is a Ph.D student under the supervision of Prof. Uzi Vishne, Bar-Ilan University, Ramat-Gan, Israel. email: adam1chapman@yahoo.com}

\begin{abstract}
In this paper, we present a new method for solving standard quaternion equations.
Using this method we reobtain the known formulas for the solution of a quadratic quaternion equation, and provide an explicit solution for the cubic quaternion equation, as long as the equation has at least one pure imaginary root.
We also discuss the number of essential pure imaginary roots of a two-sided quaternion polynomial.
\end{abstract}

\maketitle


\section{Introduction}\label{sec1}

Let $\mathbb{H}=\mathbb{R}+\mathbb{R} i+\mathbb{R} j+\mathbb{R} k$ be the real quaternion algebra, with $i^2=j^2=-1$, $k=i j$ and $j i=-k$.

Every element $z$ in this algebra is therefore of the form $z=c_1+c_2 i+c_3 j+c_4 k$ where $c_1,c_2,c_3,c_4 \in \mathbb{R}$.
Let $\Re(z)=c_1$ and $\Im(z)=z-\Re(z)=c_2 i+c_3 j+c_4 k$. We call $\Re(z)$ the real part of $z$ and $\Im(z)$ the imaginary part.
If $\Re(z)=z$ then $z$ is called pure real and if $\Im(z)=z$ then $z$ is called pure imaginary.
Every element $z$ then can be written as the sum of two elements $r+x$ such that $r=c_1$ is pure real and $x=c_2 i+c_3 j+c_4 k$ is pure imaginary.
By easy calculation one can show that $x^2=-(c_2^2+c_3^2+c_4^2) \in \mathbb{R}$.

The conjugate of $z$ is defined to be $\bar{z}=r-x=c_1-c_2 i-c_3 j-c_4 k$.
The norm of $z$ is defined to be $\norm(z)=z \bar{z}=r^2-x^2=c_1^2+c_2^2+c_3^2+c_4^2 \in \mathbb{R}$.
The norm is known to be a multiplicative function, i.e. $f(z_1 z_2)=f(z_1) f(z_2)$, and for any $c \in \mathbb{R}$, $f(c z)=c^2 f(z)$.

A quaternion polynomial equation with one indeterminate $z$ is called standard if it is of the form $a_n z^n+\dots+a_1 z+a_0=0$ for some $a_0,\dots,a_n \in \mathbb{H}$. Mark that since the quaternion algebra is noncommutative, the order of multiplication is crucial, for instance the equations $a z^2-b=0$, $z a z-b=0$ and $z^2 a-b=0$ are three distinct equations.

In \cite{Janovska} Janovsk\'{a} and Opfer reduced the problem of solving any standard quaternion equation of degree $n$ to a real equation of degree $2 n$.
However, for the case of $n=2$ it is not the optimal, since there are reductions into equations of degree $3$ instead of $4$ (see \cite{Huang and So}, \cite{Au-Yeung}).

Here we present a new method for solving quaternion standard equations.
For the case of $n=2$ it is very similar to the techniques appearing in \cite{Huang and So} and \cite{Au-Yeung}. For the case of $n=3$, if the equation has at least one pure imaginary root, then the problem is reduced to solving a few real equations of degrees no greater than $4$, as opposed to the degree $6$ equation that arises from the method in \cite{Janovska}.

Later in this paper we shall use Wedderburn's decomposition method for standard quaternion polynomials.
The ring of standard (or left) quaternion polynomials $\mathbb{H}[z]$ is simply the ring obtained by adding the variable $z$ to the quaternion algebra with the relations $z a=a z$ for any $a \in \mathbb{H}$. The elements $a z^2$, $z a z$ and $z^2 a$ are the same inside this ring.
However, every polynomial $f(z)$ in that ring has a standard form, where the coefficients lie on the left-hand side of the variable, i.e. $f(z)=a_n z^n+\dots+a_1 z+a_0$ for some $a_0,\dots,a_n \in \mathbb{H}$.
When substituting an element $z_0 \in \mathbb{H}$ in $f(z)$ we substitute in the standard form, i.e. $f(z_0)=a_n z_0^n+\dots+a_1 z_0+a_0$.
We call $a$ a root of $f(z)$ if $f(a)=0$.
Consequently, finding the roots of a polynomial in this ring is equivalent to solving a standard quaternion equation.

It is important to mention that the substitution map $S_{z_0} : \mathbb{H}[z] \rightarrow \mathbb{H}$, taking $S_{z_0}(f(z))=f(z_0)$, is not a ring homomorphism if $z_0$ is not pure real.
For example, if $z_0=i$, $g(z)=z-j$, $h(z)=z+j$ and $f(z)=g(z) h(z)=z^2+1$ then $g(i) h(i)=(i-j) (i+j)=2 k \neq 0$ while $f(i)=0$.

The following statement is known to be true (see \cite{R}):
For given $f(z),g(z),h(z) \in \mathbb{H}[z]$, if $f(z)=g(z) h(z)$ and $a$ is a root of $f(z)$ but not of $h(z)$ then $h(a) a h(a)^{-1}$ is a root of $g(z)$.
Consequently, if $n=\deg(f)$ distinct roots of $f(z)$ are known
then we can factorize $f(z)$ completely to linear factors.
The opposite is not true, i.e. there is no simple algorithm for finding the roots of a polynomial knowing its factorization.

\section{Roots of a quaternion standard polynomial}\label{imsec}

Let there be a monic polynomial $f(z)=z^n+a_{n-1} z^{n-1}+\dots+a_1 z+a_0 \in \mathbb{H}[z]$ where $a_{k-1},\dots,a_0 \in Q$ and $a_0 \neq 0$.

Similarly to the ring of standard polynomials with one variable $\mathbb{H}[z]$, one can look at the ring of polynomials with two variables $\mathbb{H}[r,N]$ where $r a=a r$ and $N a=a N$ for any $a \in \mathbb{H}$.

\begin{lem}\label{twovar}
There exist polynomials $g,h \in \mathbb{H}[r,N]$ such that $f(z_0)=g(r_0,N_0) x_0+h(r_0,N_0)$ for any $z_0 \in \mathbb{H}$, $r_0=\Re(z)$, $x_0=\Im(z)$, $N_0=-x_0^2$.
\end{lem}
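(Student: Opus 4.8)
The plan is to reduce each power $z_0^k$ to the desired normal form and then assemble $f$ linearly. The crucial structural fact is that $r_0 = \Re(z_0)$ is real, hence central in $\mathbb{H}$, and that $x_0^2 = -N_0$ is also real (as noted in the introduction); consequently every expression built from $z_0$ collapses into a real part plus a real multiple of $x_0$, with no higher powers of $x_0$ surviving.

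First I would prove by induction on $k$ that there exist polynomials $p_k, q_k \in \mathbb{R}[r,N]$, in fact with integer coefficients, such that $z_0^k = p_k(r_0,N_0) + q_k(r_0,N_0)\,x_0$ for every $z_0 \in \mathbb{H}$. The base cases are $p_0 = 1$, $q_0 = 0$ and $p_1 = r$, $q_1 = 1$. For the inductive step I would write $z_0^{k+1} = z_0^k z_0 = (p_k + q_k x_0)(r_0 + x_0)$ and use $x_0 r_0 = r_0 x_0$ together with $x_0^2 = -N_0$ to obtain the recursion $p_{k+1} = r\,p_k - N\,q_k$ and $q_{k+1} = p_k + r\,q_k$, which visibly keeps $p_k, q_k$ inside $\mathbb{R}[r,N]$.

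Next I would substitute this normal form into $f$. Setting $a_n = 1$, we have $f(z_0) = \sum_{k=0}^n a_k z_0^k = \sum_{k=0}^n a_k\bigl(p_k(r_0,N_0) + q_k(r_0,N_0)\,x_0\bigr)$. Because each $p_k(r_0,N_0)$ and $q_k(r_0,N_0)$ is real, it commutes past $x_0$, so I may separate the two types of terms and define $h(r,N) = \sum_{k=0}^n a_k p_k(r,N)$ and $g(r,N) = \sum_{k=0}^n a_k q_k(r,N)$, both elements of $\mathbb{H}[r,N]$. Distributivity then yields $f(z_0) = h(r_0,N_0) + g(r_0,N_0)\,x_0$, which is the claimed identity.

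The computations themselves are routine; the only point demanding genuine care is the noncommutativity. I must keep the quaternion coefficients $a_k$ strictly on the left and the single factor $x_0$ on the right throughout, and this rearrangement is legitimate precisely because $p_k$ and $q_k$ take real, hence central, values. Checking that the recursion never manufactures a factor failing to commute with $x_0$, so that the odd and even powers of $x_0$ separate cleanly without stray cross terms, is therefore the heart of the argument rather than the binomial bookkeeping.
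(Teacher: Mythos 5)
Your proposal is correct and follows essentially the same route as the paper: both exploit that $r_0$ is central and $x_0^2=-N_0$ is real to collapse each power $z_0^k$ into the form $h_k(r_0,N_0)+g_k(r_0,N_0)\,x_0$ with real-valued coefficient polynomials, and then assemble $g$ and $h$ by summing against the $a_k$ kept on the left. The only difference is presentational: you generate the coefficient polynomials by the recursion $p_{k+1}=r\,p_k-N\,q_k$, $q_{k+1}=p_k+r\,q_k$ instead of writing them in closed form via the binomial theorem as the paper does, which if anything is cleaner.
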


\begin{proof}
Let $z_0$ be some arbitrary element in $\mathbb{H}$. $f(z_0)=z_0^n+a_{n-1} z_0^{n-1}+\dots+a_1 z_0+a_0$.
Now, $z_0=r_0+x_0$ for some pure real $r_0$ and some pure imaginary $x_0$.
Since $r_0$ is real, it commutes with $x_0$.
Therefore $z_0^k=\sum_{m=0}^k \binom{k}{m} r_0^{m-k} x_0^m$.
Let $N_0=-x_0^2$. This element is pure real.
For all $1 \leq k \leq n$, $z_0^k=(\sum_{m=0}^{\lfloor \frac{k-1}{2} \rfloor} \binom{k}{2 m+1} (-1)^m N_0^m r_0^{k-(2 m+1)}) x_0+\sum_{m=0}^t \binom{k}{2 m+1} (-1)^m N_0^m r_0^{k-2 m}$.
Let $g_k(r,N)=\sum_{m=0}^{\lfloor \frac{k-1}{2} \rfloor} \binom{k}{2 m+1} (-1)^m N_0^m r_0^{k-(2 m+1)}$ and $h(r,N)=\sum_{m=0}^t \binom{k}{2 m+1} (-1)^m N_0^m r_0^{k-2 m}$.
Now let $g(r,N)=g_n(r,N)+a_{n-1} g_{n-1}(r,N)+\dots+a_1 g_1(r,N)$ and $h(r,N)=h_n(r,N)+a_{n-1} h_{n-1}(r,N)+\dots+a_1 h_1(r,N)+a_0$.
It is easy to see that $f(z_0)=g(r_0,N_0) x_0+h(r_0,N_0)$.
\end{proof}

\begin{thm} \label{normeq}
Given an element $z_0 \in \mathbb{H}$, $x_0,r_0,N_0$ are as in Lemma \ref{twovar}, $z_0$ is a root of $f(z)$ if and only if one of the following conditions is satisfied:
\begin{enumerate}
\item $(r_0,N_0)$ is a solution to both $h(r,N)=0$ and $g(r,N)=0$.
\item $(r_0,N_0)$ is a solution to the equation $-g(r,N) \overline{g(r,N)} g(r,N) N=h(r,N) \overline{g(r,N)} h(r,N)$ and $x_0=-g(r_0,N_0)^{-1} h(r_0,N_0)$.
\end{enumerate}
\end{thm}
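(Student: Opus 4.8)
The plan is to reduce the root condition to a single \emph{linear} equation in the imaginary part $x_0$, and then split into two cases according to whether the coefficient of $x_0$ vanishes. By Lemma~\ref{twovar}, once I abbreviate $g:=g(r_0,N_0)$ and $h:=h(r_0,N_0)$ — which are honest quaternions, since $r_0$ and $N_0$ are real numbers — the substitution reads $f(z_0)=g\,x_0+h$. Hence $z_0$ is a root precisely when $g\,x_0=-h$, and the entire argument is organised around solving this equation for the pure imaginary $x_0$, keeping in mind that $x_0$ already satisfies $x_0^2=-N_0$ by construction.

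First I would dispose of the degenerate case $g=0$. Then $f(z_0)=h$, so $f(z_0)=0$ forces $h=0$, and conversely $g=h=0$ gives $f(z_0)=0\cdot x_0+0=0$; this is exactly condition (1). For the main case $g\neq 0$, the quaternion $g$ is invertible, so $g\,x_0=-h$ is equivalent to $x_0=-g^{-1}h$, which is the second clause of condition (2). It then remains to produce the displayed equation $-g\overline{g}\,g\,N=h\overline{g}\,h$ as a consequence, so that a root yields a solution of a quaternionic (hence, after separating coordinates, a system of real) equation in $(r_0,N_0)$.

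The key computation is to substitute $h=-g\,x_0$ into the right-hand side and exploit that the norm is central. Since $\overline{g}\,g=\norm(g)\in\mathbb{R}$ is central, I obtain
\[
h\overline{g}\,h=(-g\,x_0)\overline{g}(-g\,x_0)=g\,x_0(\overline{g}\,g)x_0=\norm(g)\,g\,x_0^{2}=-N_0\,\norm(g)\,g,
\]
using $x_0^{2}=-N_0$ in the last step. On the other hand $g\overline{g}=\norm(g)$ is likewise central, so $-g\overline{g}\,g\,N_0=-\norm(g)\,N_0\,g=-N_0\,\norm(g)\,g$, and the two sides agree. This establishes the equation at $(r_0,N_0)$ whenever $z_0$ is a root with $g\neq 0$, completing the forward direction. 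The point needing the most care is the bookkeeping of the noncommutative products: the identity works only because $\norm(g)$ and $N_0$ are real and may be moved freely past the quaternion $g$, so I would make the centrality of these scalars the explicit hinge of the argument.

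For the converse I would observe that each condition is manifestly sufficient. Condition (1) gives $f(z_0)=0$ immediately. Under condition (2) the quaternion $g$ is invertible (its inverse appears in the hypothesis), and $x_0=-g^{-1}h$ yields $f(z_0)=g\,x_0+h=g(-g^{-1}h)+h=0$. I would emphasise that sufficiency here rests entirely on the relation $x_0=-g^{-1}h$, not on the displayed equation itself: that equation is a necessary companion — the object one actually solves to locate candidate pairs $(r_0,N_0)$ — but a pair satisfying it need not produce a root unless the accompanying relation $x_0=-g^{-1}h$ is also imposed. I would flag this asymmetry as the one genuinely substantive feature of the statement, the rest being a direct translation of $g\,x_0+h=0$ through invertibility and the centrality of norms.
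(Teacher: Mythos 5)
Your proof is correct and takes essentially the same approach as the paper: both reduce the root condition via Lemma~\ref{twovar} to the linear equation $g\,x_0=-h$, dispose of the degenerate case $g=h=0$ as condition (1), and for invertible $g$ obtain the displayed norm equation from $x_0^2=-N_0$ together with the centrality of $\norm(g)$. The only cosmetic difference is that you verify the identity by substituting $h=-g\,x_0$ into $h\overline{g}\,h$, whereas the paper derives it by left-multiplying $g\,x_0=-h$ by $\overline{g}$ and squaring; this is the same computation, and your explicit check that condition (2) is sufficient merely fills in a step the paper leaves implicit.
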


\begin{proof}
If $h(r_0,N_0)=g(r_0,N_0)=0$ then $f(z_0)=g(r_0,N_0) x_0+h(r_0,N_0)=0 x_0+0=0$, i.e. $z_0$ is a root of $f(z)$.

If $h(r_0,N_0) \neq 0$ or $g(r_0,N_0) \neq 0$ while $f(z_0)=0$, then $h(r_0,N_0) \neq 0$ and $g(r_0,N_0) \neq 0$, because $g(r_0,N_0) x_0=-h(N_0)$.
Therefore $\overline{g(r_0,N_0)} g(r_0,N_0) x_0=\norm(g(r_0,N_0)) x_0=-\overline{g(r_0,N_0)} h(r_0,N_0)$.

Consequently $-\norm(g(r_0,N_0))^2 N_0=\overline{g(r_0,N_0)} h(r_0,N_0) \overline{g(r_0,N_0)} h(r_0,N_0)$, \\
i.e. $-g(r_0,N_0) \norm(g(r_0,N_0)) N_0=\overline{g(r_0,N_0)} h(r_0,N_0) \overline{g(r_0,N_0)} h(r_0,N_0)$.
This is surely not the trivial equation, because the difference between the lowest degree among the nonzero monomials of the right-hand side of the equation and the lowest degree among the nonzero monomials of the left-hand side of the equation is at least $1$.
Consequently, $(r_0,N_0)$ is a root of the equation $g(r,N) \overline{g(r,N)} g(r,N) N=h(r,N) \overline{g(r,N)} h(r,N)$.
\end{proof}

\section{Solving quadratic equations}

Let $f(z)=z^2+a z+b$. By replacing $z$ with $z-\frac{\Re(a)}{2}$, we may assume that $\Re(a)=0$.
The case of $a=0$ is simple. If $b$ is not pure real then the roots are $\pm \sqrt[4]{\norm{b}} e^{\frac{\theta}{2} \frac{\Im(b)}{\sqrt{\norm(b)}}}$ where $\theta$ is the phase of $b$ in its polar decomposition as a quaternion. If $b$ is pure real then if it is negative then the roots are all the pure imaginary elements whose norms are real square roots of $\norm(b)$. Otherwise, the roots are the real positive and negative square roots of $b$.

Therefore we assume $a \neq 0$. We assumed that $\Re(a)=0$ and therefore $a$ is a nonzero pure imaginary.
Taking $d=\frac{b+a b a^{-1}}{2}$, it is clear that $a d=-d a$ and $a (b-d)=(b-d) a$.
Since $b-d$ commutes with $a$, it is of the form $m+n a$ for some $m,n \in \mathbb{R}$.
The case of $d=0$ is again simple, as in the case of $a=0$.
Consequently we shall assume that $d \neq 0$.

\begin{thm} \label{quadsolve}
Assume $a,d \neq 0$. Let $z_0$ be a root of $f(z)=z^2+a z+m+n a+d$.
If $n \neq 0$ then $r_0=\Re(z_0)$ is a solution to the equation $16 r^6+(-8 a^2+16 m) r^4+(-a^2 (4 m-a^2)+4 a^2 n^2+4 d^2) r^2-a^4 n^2=0$ and $\Im(z_0)=-(2 r_0+a)^{-1} (\frac{1}{2 r_0} a (r_0+n) (2 r_0+a)+d)$.
If $n=0$ then one of the following happens:
\begin{enumerate}
\item $r_0=0$, $N_0=-\Im(z_0)^2$ is a solution to the equation $0=N^2+(a^2-2 m) N+m^2-d^2$ and $\Im(z_0)=-a^{-1} (m+d-N_0)$
\item $r_0$ is a solution to the equation $0=16 r^4+(-8 a^2+16 m) r^2-a^2 (4 m-a^2)+4 d^2$ and $\Im(z_0)=-(2 r_0+a)^{-1} (\frac{1}{2} a (2 r_0+a)+d)$.
\end{enumerate}
\end{thm}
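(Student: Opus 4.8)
The plan is to specialize Lemma~\ref{twovar} and Theorem~\ref{normeq} to the polynomial $f(z)=z^2+az+(m+na+d)$ and then carry out the resulting quaternionic computation in a well-chosen real basis. First I would make $g$ and $h$ explicit. Since $z_0^2=(r_0^2-N_0)+2r_0x_0$ and $z_0=r_0+x_0$, collecting the left coefficient of $x_0$ gives $g(r,N)=2r+a$, while the remaining terms give $h(r,N)=(r^2-N+m)+(r+n)a+d$. Because $a$ is a nonzero pure imaginary, $g(r_0,N_0)=2r_0+a$ can never vanish (a real scalar cannot equal a nonzero pure imaginary), so the first alternative of Theorem~\ref{normeq} is vacuous and every root arises from the second: we always have $x_0=-g(r_0,N_0)^{-1}h(r_0,N_0)$, and $(r_0,N_0)$ satisfies the master equation $h\overline{g}h=-g\overline{g}g\,N$.

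The key device is to expand both sides of the master equation in the real basis $\{1,a,d,ad\}$. Since $a$ and $d$ are anticommuting pure imaginaries they are orthogonal, so this quadruple plays the role of $\{1,i,j,k\}$ and lets me multiply out $g$, $\overline{g}=2r-a$ and $h$ componentwise, using that $a^2,d^2\in\mathbb{R}$ and $ad=-da$. Writing $p=r^2-N+m$ and $q=r+n$, a direct expansion of $h\overline{g}h$ produces a scalar part, an $a$-part and a $d$-part, with the $ad$-part cancelling identically; on the other side $-g\overline{g}g\,N=-2rN(4r^2-a^2)-N(4r^2-a^2)a$ has only a scalar and an $a$-part. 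Matching the four components therefore yields three scalar equations: the $d$-component gives $4rp-2qa^2=0$, and the scalar and $a$-components give two further relations among $p,q,N,r$.

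From here the work is bookkeeping. The $d$-equation reads $2rp=qa^2$; substituting $p=r^2-N+m$ solves for $N$ in terms of $r$ whenever $r\neq0$, and it also rewrites $p+qa$ as $\tfrac{1}{2r}a(r+n)(2r+a)$, which is exactly the claimed form of $\Im(z_0)=-g^{-1}h$. A short computation shows that, once the $d$-equation holds, the scalar and $a$-component equations become equivalent, so only one is independent; eliminating $N$ between it and the $d$-equation collapses all odd powers of $r$ and leaves precisely $16r^6+(-8a^2+16m)r^4+(-a^2(4m-a^2)+4a^2n^2+4d^2)r^2-a^4n^2=0$. I then split on $n$. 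If $n\neq0$ the $d$-equation forces $r_0\neq0$ (otherwise $q=n=0$), giving the stated sextic and imaginary part. If $n=0$ the $d$-equation factors as $2r(2p-a^2)=0$, and the sextic itself factors as $r^2$ times $16r^4+(-8a^2+16m)r^2-a^2(4m-a^2)+4d^2$: the branch $r_0=0$ reduces the $a$-component equation directly to $N^2+(a^2-2m)N+m^2-d^2=0$ with $\Im(z_0)=-a^{-1}(m+d-N_0)$, while the branch $r_0\neq0$ (where $p=\tfrac{a^2}{2}$) forces the quartic factor to vanish.

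I expect the main obstacle to be the component expansion of $h\overline{g}h$ together with the verification that, modulo the $d$-equation, the scalar and $a$-component equations coincide: this is what makes the elimination of $N$ produce a single sextic rather than an over-determined system, and it is where the orthogonality of $a,d$ and the relation $ad=-da$ are used most delicately. Keeping $a^2$ and $d^2$ as real scalars throughout, instead of replacing them by their norms, is what keeps the final coefficients in the clean form stated.
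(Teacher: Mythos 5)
Your proposal is correct and follows essentially the same route as the paper's proof: the same specialization $g(r,N)=2r+a$, $h(r,N)=(r^2-N+m)+(r+n)a+d$, the same observation that $g$ never vanishes so the master equation of Theorem~\ref{normeq} always applies, and the same splitting of that equation relative to $a$ --- your component-matching in the basis $\left\{1,a,d,ad\right\}$ is precisely the paper's decomposition into the parts commuting and anticommuting with $a$, with your $d$-component equation $4rp-2qa^2=0$ being the paper's anticommuting part. The subsequent elimination of $N$, the resulting sextic, and the case analysis on $n$ and $r_0$ all match the paper's argument.
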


\begin{proof}
The polynomials obtained according to the proof of Lemma \ref{twovar} are in this case $g(r,N)=2 r+a$ and $h(r,N)=r^2-N+a r+b$.
Again $r_0=\Re(z_0)$, $x_0=\Im(z_0)$ and $N_0=-x_0^2$.

Obviously $g(r_0,N_0) \neq 0$, therefore for according to Theorem \ref{normeq}, $(r_0,N_0)$ is a solution to $-g(r,N) \overline{g(r,N)} g(r,N) N=h(r,N) \overline{g(r,N)} h(r,N)$.

We shall solve this equation then.
$-(2 r+a) (2 r-a) (2 r+a) N=(r^2-N+a r+b) (2 r-a) (r^2-N+a r+b)$.

Taking only the part of the equation which anti-commutes with $a$ we obtain

$0=d (2 r-a)(r^2-N+a r+m+n a)+(2 r-a)(r^2-N+a r+m+n a) d=((2 r+a)(r^2-N-a r+m-n a)+(2 r-a)(r^2-N+a r+m+n a))d$

Which means that $0=(2 r+a)(r^2-N-a r+m-n a)+(2 r-a)(r^2-N+a r+m+n a)=4 r^3-4 r N+4 r m-2 a^2 r-2 n a^2$.

If $n \neq 0$ then $r \neq 0$, and so $N=r^2+m-\frac{1}{2} a^2-\frac{1}{2 r} n a^2$.

$h(r,N)=r^2-N+a r+b=r^2-(r^2+m-\frac{1}{2} a^2-\frac{1}{2 r} n a^2)+a r+m+n a+d=\frac{1}{2} a^2+\frac{1}{2 r} n a^2+ a r+n a+d
=\frac{1}{2 r} a (r+n) (2 r+a)+d$

The equation of interest is $-(2 r+a) (2 r-a) (2 r+a) N=h(r,N) (2 r-a) h(r,N)$.
Its part which commutes with $a$ provides us with
$-(2 r+a) (2 r-a) (2 r+a) N=(\frac{1}{2 r} a (r+n) (2 r+a)) (2 r-a) (\frac{1}{2 r} a (r+n) (2 r+a))+d (2 r-a) d
=\frac{1}{4 r^2} (2 r+a) (4 r^2-a^2) a^2 (r+n)^2+(2 r+a) d^2$

Therefore $-(4 r^2-a^2) N=\frac{1}{4 r^2}(4 r^2-a^2) a^2 (r+n)^2+ d^2$, which means that
$0=\frac{1}{4 r^2}(4 r^2-a^2)(4 r^2 (r^2+m-\frac{1}{2} a^2-\frac{1}{2 r} n a^2)+a^2 (r+n)^2)+d^2
=\frac{1}{4 r^2}(4 r^2-a^2) (4 r^4+4 r^2 m-2 a^2 r^2-2 r n a^2+a^2 r^2+2 a^2 r n+a^2 n^2)+d^2
=\frac{1}{4 r^2}(4 r^2-a^2) (4 r^4+4 r^2 m-a^2 r^2+a^2 n^2)+d^2$.

Consequently $16 r^6+(-8 a^2+16 m) r^4+(-a^2 (4 m-a^2)+4 a^2 n^2+4 d^2) r^2-a^4 n^2=0$.

If $n=0$ then $0=4 r^3-4 r N+4 r m-2 a^2 r=r (4 r^2-4 N+4 m-2 a^2)$, which means that either $r=0$ or $N=r^2+m-\frac{1}{2} a^2$.
If $r=0$ then $a^3 N=(-N+m+d) (-a) (-N+m+d)$. Taking only the part which commutes with $a$ we obtain
$a^3 N=-(-N+m)^2 a+a d^2$, hence $a^2 N=-N^2+2 m N-m^2+d^2$, and consequently $0=N^2+(a^2-2 m) N+m^2-d^2$.

If $N=r^2+m-\frac{1}{2} a^2$ then $h(r,N)=r^2-N+a r+b=r^2-(r^2+m-\frac{1}{2} a^2)+a r+m+d=\frac{1}{2} a^2+a r+d=\frac{1}{2} a (2 r+a)+d$.
From $-(2 r+a) (2 r-a) (2 r+a) N=h(r,N) (2 r-a) h(r,N)$ we obtain $-(2 r+a) (2 r-a) (2 r+a) N=(\frac{1}{2} a (2 r+a)+d) (2 r-a) (\frac{1}{2} a (2 r+a)+d)$. Taking the part which commutes with $a$ we get
$-(2 r+a) (2 r-a) (2 r+a) N=\frac{1}{4} a^2 (2 r+a)^2 (2 r-a)+(2 r+a) d^2$.
Therefore $-(4 r^2-a^2) N=\frac{1}{4} a^2 (4 r^2-a^2)+ d^2$, hence
$0=\frac{1}{4} (4 r^2-a^2) (4 (r^2+m-\frac{1}{2} a^2)+a^2)+d^2=\frac{1}{4} (4 r^2-a^2) (4 r^2+ 4m-a^2)+d^2$
and consequently $0=16 r^4+(-8 a^2+16 m) r^2-a^2 (4 m-a^2)+4 d^2$.
\end{proof}

\section{Pure imaginary roots of a quaternion standard polynomial}

Let $f(z)$, $g(r,N)$ and $h(r,N)$ as in Lemma \ref{twovar}. Let $g(N)=g(0,N)$ and $h(N)=h(0,N)$. For every pure imaginary $z_0$, $f(z_0)=g(N_0) z_0+h(N_0)$ where $N_0=\norm(z_0)=-z_0^2$.
In particular, $\deg(g)=\lfloor \frac{\deg{f}-1}{2} \rfloor$ and $\deg{h} \leq \lfloor \frac{\deg{f}}{2} \rfloor$.

The following corollary is an easy result of Theorem \ref{normeq}:
\begin{cor}\label{pureim}
A pure imaginary element $z_0$ of norm $N_0$ is a root of $f(z)$ if and only if one of the following conditions is satisfied:
\begin{enumerate}
\item $N_0$ is a solution to both $h(N)=0$ and $g(N)=0$.
\item $N_0$ is a solution to the equation $-g(N) \overline{g(N)} g(N) N=h(N) \overline{g(N)} h(N)$ and $z_0=-g(N_0)^{-1} h(N_0)$.
\end{enumerate}
\end{cor}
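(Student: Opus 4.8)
The plan is to obtain this corollary directly from \Tref{normeq} by specializing to the pure imaginary case, that is, by setting $r_0=0$. First I would observe that if $z_0$ is pure imaginary then, in the notation of Lemma~\ref{twovar}, we have $r_0=\Re(z_0)=0$ and $x_0=\Im(z_0)=z_0$, so that $N_0=-x_0^2=-z_0^2=\norm(z_0)$. With $r_0=0$ the two-variable polynomials collapse to the single-variable polynomials $g(N)=g(0,N)$ and $h(N)=h(0,N)$, and Lemma~\ref{twovar} reduces to the identity $f(z_0)=g(N_0)z_0+h(N_0)$ already recorded in the paragraph preceding the statement.

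Next I would simply read off the two conditions of \Tref{normeq} under this specialization. Condition (1) there requires that $(r_0,N_0)$ solve both $h(r,N)=0$ and $g(r,N)=0$; setting $r_0=0$ this becomes the requirement that $N_0$ solve both $h(N)=0$ and $g(N)=0$, which is condition (1) here. Condition (2) there requires that $(r_0,N_0)$ solve $-g(r,N)\overline{g(r,N)}g(r,N)N=h(r,N)\overline{g(r,N)}h(r,N)$ together with $x_0=-g(r_0,N_0)^{-1}h(r_0,N_0)$; setting $r_0=0$ and using $x_0=z_0$, this becomes the requirement that $N_0$ solve $-g(N)\overline{g(N)}g(N)N=h(N)\overline{g(N)}h(N)$ together with $z_0=-g(N_0)^{-1}h(N_0)$, which is condition (2) here.

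Since \Tref{normeq} is an ``if and only if'' characterization valid for every $z_0\in\mathbb{H}$, and in particular for pure imaginary $z_0$, this specialization is a genuine equivalence and no further argument is needed. There is essentially no obstacle: the only point requiring care is the bookkeeping that $x_0=z_0$ in the pure imaginary case, which is precisely what makes the root-recovery formula in condition (2) take the clean form $z_0=-g(N_0)^{-1}h(N_0)$ rather than a formula for $\Im(z_0)$. The entire content of the corollary is thus inherited from the theorem.
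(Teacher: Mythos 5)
Your proposal is correct and is exactly what the paper intends: the paper gives no explicit proof, stating only that the corollary is ``an easy result of Theorem~\ref{normeq},'' having already set up $g(N)=g(0,N)$, $h(N)=h(0,N)$ and the identity $f(z_0)=g(N_0)z_0+h(N_0)$ in the preceding paragraph. Your specialization to $r_0=0$ with $x_0=z_0$ fills in that omitted routine argument along precisely the same lines.
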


\begin{prop}\label{infcor}
The polynomial $f(z)$ has infinitely many pure imaginary roots if and only if $h(N)=0$ and $g(N)=0$ have a common real solution.
\end{prop}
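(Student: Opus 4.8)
The plan is to prove both implications by sorting the pure imaginary roots of $f$ according to the two cases of Corollary \ref{pureim}, and by showing that only case~(1) can be responsible for an infinite family. Throughout, I recall that for a pure imaginary $z_0$ we have $f(z_0)=g(N_0)z_0+h(N_0)$ with $N_0=\norm(z_0)\geq 0$, and that $h(0)=a_0\neq 0$, so $N_0=0$ is never a root of $h(N)=0$.

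The reverse direction is the easy one. Suppose $N_0$ is a common solution of $g(N)=0$ and $h(N)=0$; since $h(0)=a_0\neq 0$ we have $N_0\neq 0$, and as $N_0$ must be the norm of a pure imaginary element I take $N_0>0$. Then the pure imaginary quaternions of norm $N_0$ form a $2$-sphere of radius $\sqrt{N_0}$, an infinite set, and by case~(1) of Corollary \ref{pureim} every one of them is a root of $f$. Hence $f$ has infinitely many pure imaginary roots.

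For the converse, assume $f$ has infinitely many pure imaginary roots, and suppose first that $g\not\equiv 0$. I would argue that case~(2) of Corollary \ref{pureim} produces only finitely many roots: there a root is uniquely recovered from its norm via $z_0=-g(N_0)^{-1}h(N_0)$, so it suffices to bound the number of admissible $N_0$. These $N_0$ satisfy the one-variable equation $-g(N)\overline{g(N)}g(N)N=h(N)\overline{g(N)}h(N)$, and the key point is that this equation is not identically satisfied; being a nontrivial identity in the real variable $N$ (at least one of its four real component polynomials is nonzero), it has only finitely many solutions. Consequently case~(2) yields finitely many roots, so infinitely many roots fall under case~(1); in particular case~(1) is nonempty, and any case~(1) root $z_0$ has norm $N_0>0$ with $g(N_0)=h(N_0)=0$, which is exactly a common positive real solution.

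The crux is therefore the nontriviality of the norm equation, which I would establish just as in the proof of Theorem \ref{normeq}, by comparing lowest-degree monomials. Writing $v(p)$ for the least degree of a nonzero monomial of a polynomial $p(N)$, the left-hand side $-\norm(g(N))\,g(N)\,N$ has order $3v(g)+1$, whereas $h(N)\overline{g(N)}h(N)$ has order $v(g)$, since $h(0)=a_0\neq 0$ forces $v(h)=0$ and hence its lowest monomial $a_0\,\overline{g_{v(g)}}\,a_0\,N^{v(g)}$ is nonzero (using that $\mathbb{H}$ has no zero divisors). As $3v(g)+1>v(g)$, this monomial cannot cancel, so the two sides differ and the equation is nontrivial. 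Finally I would dispose of the degenerate case $g\equiv 0$: then case~(2) is vacuous, every pure imaginary root satisfies case~(1), i.e.\ $h(N_0)=0$, and since $g(N)=0$ holds identically, a common solution of $g=0$ and $h=0$ is just a positive root of $h$, which exists precisely when there are infinitely many roots. The one subtlety to keep flagged is positivity: a common real solution is relevant only for $N_0>0$, which is automatic in the forward direction but must be imposed in the reverse one.
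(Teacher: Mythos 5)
Your proof is correct and shares the paper's overall skeleton --- the easy direction via the $2$-sphere of pure imaginary elements of a common norm, and the converse by showing that case (2) of Corollary \ref{pureim} contributes only finitely many roots because the norm equation is a nontrivial one-variable polynomial identity --- but the key nontriviality step is argued differently, and you cover ground the paper skips. The paper compares \emph{top} degrees: the left side has degree $3\deg(g)+1$ and the right side $2\deg(h)+\deg(g)$, and equality would force $2\deg(g)+1=2\deg(h)$, odd equal to even, which is impossible, so the leading term of the higher-degree side survives. You compare \emph{lowest} degrees instead, using $h(0)=a_0\neq 0$ to pin the order of $h(N)\overline{g(N)}h(N)$ at exactly $v(g)$, with nonzero lowest coefficient $a_0\,\overline{g_{v(g)}}\,a_0$ (no zero divisors), against order $3v(g)+1$ on the left; this transplants the device the paper itself uses in proving Theorem \ref{normeq}, and is equally valid. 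Beyond that, you handle two points the paper silently ignores. First, the degenerate case $g\equiv 0$: there the norm equation reads $0=0$ and any degree comparison is vacuous, so the paper's argument does not literally apply, and this case does occur (e.g. $f(z)=z^2+a_0$, where the paper's formula $\deg(g)=\lfloor(\deg f-1)/2\rfloor$ also fails); your separate treatment closes this hole. Second, positivity: the proposition as literally stated is false when the only common solution is negative --- for instance $f(z)=z^3+z^2-z-1=(z+1)^2(z-1)$ has $g(N)=h(N)=-N-1$, so $N_0=-1$ is a common real solution, yet $f$ has no pure imaginary roots at all --- so your insistence that the common solution be positive (automatic in the forward direction, imposed in the reverse) is not pedantry but a needed correction to both the statement and the paper's own proof, which tacitly treats any common solution as the norm of a pure imaginary element.
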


\begin{proof}
If $h(N)=0$ and $g(N)=0$ have a common real solution $N_0$ then every element $z_0 \in Q$ satisfying $-z_0^2=N_0$ is a root of $f(z)$.

If $h(N)$ and $g(N)$ have no common root, and $z_0$ is a pure imaginary root of $f(z)$ of norm $N_0$, then $h(N_0) \neq 0$ and $g(N_0) \neq 0$.
On the other hand, $N_0$ is a solution to the equation $g(N) \overline{g(N)} g(N) N=h(N) \overline{g(N)} h(N)$.
The degree of the left-hand side of this equation is $3 \deg(g)+1$, while the degree of the right-hand side is $2 \deg(h)+\deg(g)$. There is an equality only if $2 \deg(g)+1=2 \deg(h)$, but that can never happen, therefore the equation is not trivial, which means that by splitting the equation into four (according to the structure of $\mathbb{H}$ as a vector space over $\mathbb{R}$, i.e. $\mathbb{R}+\mathbb{R} i+\mathbb{R} j+\mathbb{R} k$) we have at least one nontrivial equation.
Consequently, the number of roots of this system is finite, and therefore the number of pure imaginary roots of $f(z)$ is finite.
\end{proof}

\begin{rem}
If $z_0$ is a pure imaginary root then $\norm(g(N_0)) z_0=-\overline{g(N_0)} h(N_0)$.
Since $\Re(z_0)=0$, we obtain $0=\Re(-\overline{g(N_0)} h(N_0))$.
If this equation is not trivial, then it has a finite set of roots which contains all the pure imaginary roots of the original equation.
\end{rem}

\section{Solving cubic quaternion equations with at least one pure imaginary root}

\begin{lem} \label{aplem}
For any polynomial $p(z) \in \mathbb{H}[z]$,
if $z_0 \neq a$ is a root of $f(z)=p(z)(z-a)$ then $0=z_0^2-(a+b) z_0+b a$ for some root $b$ of $p(z)$
\end{lem}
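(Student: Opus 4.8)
The plan is to invoke directly the root-transfer result for quaternion polynomial factorizations quoted in Section~\ref{sec1}: if $f(z)=g(z)h(z)$ and $c$ is a root of $f$ but not of $h$, then $h(c)\,c\,h(c)^{-1}$ is a root of $g$. Here I would take $g(z)=p(z)$ and $h(z)=z-a$, so that $f(z)=p(z)(z-a)$ is exactly the given factorization, and I would play the role of the generic root $c$ with our $z_0$.

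Since $z_0\neq a$, the value $h(z_0)=z_0-a$ is a nonzero element of $\mathbb{H}$, hence invertible (as $\mathbb{H}$ is a division ring), so $z_0$ is a root of $f$ that is not a root of $h$. The transfer result then produces a root of $p(z)$, namely
\[
b:=(z_0-a)\,z_0\,(z_0-a)^{-1}.
\]

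It remains to recover the asserted quadratic relation from the definition of $b$. Clearing the inverse gives $b(z_0-a)=(z_0-a)z_0$; expanding both sides yields $b z_0 - ba = z_0^2 - a z_0$, and rearranging produces
\[
0=z_0^2-a z_0-b z_0+ba=z_0^2-(a+b)z_0+ba,
\]
as desired. The only point requiring care is the noncommutativity of $\mathbb{H}$: one must keep the factors in their correct order throughout — in particular writing the constant term as $ba$ rather than $ab$, and grouping $a z_0 + b z_0$ as $(a+b)z_0$ via distributivity on the left. I expect no genuine obstacle, since $b$ is defined precisely so that its conjugation relation with $z_0-a$ linearizes into this quadratic; the whole argument is a one-line application of the cited factor theorem followed by an elementary rearrangement.
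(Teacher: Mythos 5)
Your proof is correct and follows exactly the paper's own argument: both invoke the root-transfer (Wedderburn) result quoted in Section~\ref{sec1} with $g(z)=p(z)$, $h(z)=z-a$ to obtain $b=(z_0-a)z_0(z_0-a)^{-1}$ as a root of $p(z)$, and then rearrange $b(z_0-a)=(z_0-a)z_0$ into the quadratic relation. Your write-up is in fact slightly more careful than the paper's, since you justify explicitly that $z_0-a$ is invertible and keep track of the order of factors.
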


\begin{proof}
According to Wedderburn's method, $b=(z_0-a) z_0 (z_0-a)^{-1}$ is a root of $p(z)$.
Hence $b (z_0-a)=(z_0-a) z$, i.e. $0=z_0^2-(a+b) z_0+b a$.
\end{proof}

\begin{rem}
If the decomposition into linear factors of a given polynomial $f(z) \in \mathbb{H}[z]$ is known, then the question of finding its roots becomes (inductively) a sequence of quadratic equations one has to solve. Over the quaternion algebra the quadratic equations are solvable and so one can obtain the roots of any standard polynomial if he knows its decomposition into linear factors.
\end{rem}

Let $f(z)$ be a quaternion standard cubic polynomial.
The equation $-g(N) \overline{g(N)} g(N) N=h(N) \overline{g(N)} h(N)$ (from Corollary \ref{pureim}) is with one variable $N$ and is of degree $4$ at most, and therefore its real roots can be expressed in terms of radicals, which means that the pure imaginary roots of $f(z)$ can also be expressed in those terms.

Assume $f(z)$
has one such root $a$, then $f(z)=p(z) (z-a)$.
The polynomial $p(z)$
is quadratic and therefore its roots can be formulated.
Consequently, $p(z)$ can be fully factorized into linear factors and so is $f(z)$.
Furthermore, according to Lemma \ref{aplem}, the roots of $f(z)$ are at hand.

\subsection{Example}
Consider the polynomial $f(z)=z^3+(2+i j) z+i-j \in \mathbb{H}[z]$.

$g(N)=-N+2+i j$ and $h(N)=i-j$.
They have no common root, so we turn to solve
$-g(N) \norm(g(N)) N=h(N) \overline{g(N)} h(N)$
i.e.
$-(-N+2+i j) ((-N+2)^2+1) N=(i-j) (-N+2-i j) (i-j)$

$-(-N+2+i j) (N^2-4 N+5)  N=(-N+2+i j) (i-j) (i-j)$

$-(N^2-4 N+5)  N=-2$

$N^3-4 N^2+5 N-2=0$

In general this equation could be split into up to four equations according to the basis of $\mathbb{H}$ as an $\mathbb{R}$-vector space.
However, in this case, $N^3-4 N^2+5 N-2$ is pure real and has no imaginary part, which means that we have to solve only one cubic real equation.

Therefore either $N=1$ or $N=2$.
According to Theorem \ref{normeq}, the corresponding roots are $-g(N)^{-1} h(N)$, i.e
$z_1=-\frac{1}{2}(1-i j)(i-j)=-\frac{1}{2}(i-j-j-i)=j$
for $N=2$ we have $z_2=-(i j)(i-j)=-i-j$.

Consequently $f(z)=p(z) (z-j)$. Next goal is to calculate $p(z)$.

\begin{rem} \label{factrem}
Let us recall how $f(z)$ is decomposed into $p(z) (z-a)$ given a root $a$:

$f(z)=z^n+c_{n-1} z^{n-1}+\dots+c_0$

$f(a)=a^n+c_{n-1} a^{n-1}+\dots+c_0$

$f(z)=f(z)-0=f(z)-f(a)=(z^n-a^n)+c_{n-1} (z^{n-1}-a^{n-1})+\dots+c_1 (z-a)
=((z^{n-1}+a z^{n-2}+\dots+a^{n-1})+c_{n-1} (z^{n-2}+\dots+a^{n-2})+\dots+c_1)(z-a)$

$p(z)=(z^{n-1}+a z^{n-2}+\dots+a^{n-1})+c_{n-1} (z^{n-2}+\dots+a^{n-2})+\dots+c_1$.
\end{rem}

Consequently $p(z)=(z^2+j z-1)+2+i j=z^2+j z+1+i j$.

$-i-j$ is a root of $f(z)$ but not of $z-j$, hence according to Remark \ref{factrem}, $(-i-2 j) (-i-j) (-i-2 j)^{-1}=\frac{1}{5}(-i-2 j) (-i-j) (i+2 j)=\frac{1}{5} (-1-2 i j+i j-2)(i+2 j)=\frac{1}{5} (-3-i j) (-i-j)=\frac{1}{5} (3 i+3 j+j-i)=\frac{1}{5}(2 i+4 j)$ is a root of $p(z)$.

The second and final root of $p(z)$ (which can be obtained using the methods) is $i$.

Again, due to Wedderburn, $p(z)=(z+i+1+i j)(z-i)$, which means that $f(z)=(z+1+i+i j)(z-i)(z-j)$

Let $z_0$ be some root of $f(z)$.
According to Lemma \ref{aplem}, since $i$ is a root of $p(z)$ and is different from $\frac{1}{5}(2 i+4 j)$, $z_0$ must correspond to it, which means that $z_0^2-(j+i) z_0+i j=0$.
$j$ is a root, however it is already known to be a root of $f(z)$ so we look for the other one.
Let $t=z_0-j$ and so $t^2-i t+t j=0$.
Let $r=t^{-1}$ and so $1-r i+j r=0$.
$r=c_1+c_i i+c_j j+c_{i j} i j$, so we obtain the following linear system
\begin{eqnarray}
1+c_i-c_j&=&0\\
-c_1+c_{i j}&=&0\\
-c_{i j}+c_1&=&0\\
c_j-c_i=0
\end{eqnarray}
This system has no solution. Therefore, $f(z)$ has no roots besides $j$ and $-i-j$.

\section{A note on quadratic two-sided polynomials}

A two-sided polynomial is a polynomial of the form $f(z)=z^n+a_{n-1} z^{n-1} b_{n-1}+\dots+a_1 z b_1+c$.
Unlike the polynomials in $\mathbb{H}[z]$, when substituting an element $z_0 \in \mathbb{H}$ in the two-sided polynomial we follow the two-sided form instead of moving all the coefficients to the left, i.e. $f(z_0)=z_0^n+a_{n-1} z_0^{n-1} b_{n-1}+\dots+a_1 z_0 b_1+c$.
In \cite{Janovska2} Janovsk\'{a} and Opfer provided an example of a quadratic two-sided polynomial with more than two roots with pairwise distinct norms. (These are called essential roots in that paper.)

This is apparently impossible with pure imaginary roots, as the following proposition suggests:

\begin{prop}
The number of pure imaginary roots of $f(z)=z^2+a z b+c$, assuming $a,b,c \neq 0$, with pairwise distinct norms, is at most two.
\end{prop}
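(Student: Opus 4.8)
The plan is to use the defining feature of a pure imaginary element, namely that its square is a real scalar, to collapse the two-sided polynomial into a linear equation whose only genuine unknown is the norm.

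First I would let $z_0$ be a pure imaginary root and write $N_0 = \norm(z_0) = -z_0^2$, so that $z_0^2 = -N_0$ is real. Substituting into $f(z_0)=0$ gives $-N_0 + a z_0 b + c = 0$, hence $a z_0 b = N_0 - c$. Because $a,b \neq 0$ are invertible in $\mathbb{H}$, this solves to $z_0 = a^{-1}(N_0 - c) b^{-1}$; in particular a pure imaginary root is completely determined by its norm $N_0$, so distinct pure imaginary roots already have distinct norms.

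Next I would take norms on both sides of $a z_0 b = N_0 - c$. By multiplicativity of the norm, $\norm(a)\,\norm(b)\,N_0 = \norm(N_0 - c)$. Expanding the right-hand side via $\norm(N_0 - c) = (N_0 - \Re(c))^2 + \norm(\Im(c))$ together with the identity $\norm(\Im(c)) = \norm(c) - \Re(c)^2$ yields the single scalar equation $N_0^2 - (2\Re(c) + \norm(a)\,\norm(b))\,N_0 + \norm(c) = 0$, which is a genuine quadratic in the real unknown $N_0$ since its leading coefficient is $1$.

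A quadratic has at most two real roots, so at most two values of $N_0$ can occur; as each such $N_0$ pins down $z_0$ uniquely, there are at most two pure imaginary roots, and hence at most two with pairwise distinct norms, which is the claim. I expect the only delicate point to be the norm expansion — correctly handling $\norm(\Im(c)) = \norm(c) - \Re(c)^2$ and confirming that the leading coefficient of the resulting quadratic is forced to be nonzero, so that the bound of two can never be broken by a degenerate (e.g.\ identically zero) equation. The hypotheses $a,b \neq 0$ enter precisely here, guaranteeing that $a$ and $b$ are invertible so that $z_0$ can be recovered from $N_0$.
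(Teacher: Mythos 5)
Your proof is correct, and although it opens exactly as the paper's does, the decisive step is genuinely different. Both arguments substitute a pure imaginary root $z_0$ of norm $N_0$ to get $-N_0+az_0b+c=0$, isolate $az_0b=N_0-c$, and invert $a$ and $b$ to express $z_0$ in terms of $N_0$. The paper then squares that expression and uses $z_0^2=-N_0$, obtaining a quadratic $p(N)$ with \emph{quaternion} coefficients; to finish, it must observe that $p$ is nontrivial (its leading coefficient $a^{-1}b^{-1}a^{-1}b^{-1}$ is nonzero because $a,b$ are invertible), and it tacitly relies on the fact that a nonzero quaternion-coefficient polynomial of degree two in a real variable has at most two real zeros (seen by splitting into the four real components). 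You instead take norms of both sides of $az_0b=N_0-c$: multiplicativity gives $\norm(a)\norm(b)N_0=\norm(N_0-c)$, and your expansion $\norm(N_0-c)=(N_0-\Re(c))^2+\norm(c)-\Re(c)^2$ is valid, yielding the monic real quadratic $N_0^2-(2\Re(c)+\norm(a)\norm(b))N_0+\norm(c)=0$. This buys you two simplifications: nontriviality is automatic since the polynomial is monic, and the bound of two roots is the classical statement for real quadratics, with no componentwise argument needed. What you give up is information: taking norms keeps only one real consequence of the quaternion equation, so your set of candidate norms is potentially larger than the paper's --- irrelevant for the counting claim, but the paper's version is the one to use if one wants to actually locate the roots. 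As a bonus, your observation that $z_0=a^{-1}(N_0-c)b^{-1}$ is pinned down by $N_0$ proves the stronger statement that $f$ has at most two pure imaginary roots altogether, not merely at most two with pairwise distinct norms.
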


\begin{proof}
Let $z_0$ be a pure imaginary root of norm $N_0$.
Therefore $-N_0+a z_0 b+c=0$, i.e. $N_0-c=a z_0 b$,
hence $a^{-1} b^{-1} N_0-a^{-1} c b^{-1}=z_0$, which means that $a^{-1} b^{-1} a^{-1} b^{-1} N_0^2-(a^{-1} b^{-1} a^{-1} c b^{-1}+a^{-1} c b^{-1} a^{-1} b^{-1}) N_0+a^{-1} c b^{-1} a^{-1} c b^{-1}=-N_0$.
Consequently, $N_0$ is a root of the non-trivial polynomial $p(N)=a^{-1} b^{-1} a^{-1} b^{-1} N^2+(1-a^{-1} b^{-1} a^{-1} c b^{-1}-a^{-1} c b^{-1} a^{-1} b^{-1}) N+a^{-1} c b^{-1} a^{-1} c b^{-1}$.
Hence, the number of pure imaginary roots of $f(z)$ with pairwise distinct norms does not exceed $2$.
\end{proof}

\newcommand\paper[7]{{{#1},\ {\it{#2}},\ {#3}\ {\bf{#4}}\if!#5!\else(#5)\fi,\ {#6},\ ({#7}).}} 
\newcommand\book[4]{{{#1},\ {``{#2}''}{\if!#3!\relax\else{,\ {#3}}\fi}{\if!#4!\relax\else{,\ {#4}}\fi}.}} 
\newcommand\thesis[4]{{{#1},\ {{#2}}, Doctoral Dissertation, {#3},\ {#4}.}} 
\newcommand\mscthesis[4]{{{#1},\ {{#2}}, MSc. thesis, {#3},\ {#4}.}} 
\newcommand\preprint[4]{{{#1},\ {\it{#2}}, preprint, {#3}\ ({#4}).}} 
\newcommand\paperinBook[5]{{{#1},\ {\it{#2}},\ in {#3}, {#4}, {#5}.}} 
\newcommand\paperinEdBook[6]{{{#1},\ {\it{#2}},\ in {#3}, {#4} {#5}, {#6}.}} 
\newcommand\paperTrans[9]{{{#1},\ {\it{#2}},\ {#3}, {#4}, {#5}. Trans. {#6}, {#7}, {#8}, ({#9}).}} 
\newcommand\paperCorr[8]{{{#1},\ {\it{#2}},\ {#3},\ {#4},\ {#5},\ corr. ibid. {#6}, {#7}, {(#8)}.}} 
\newcommand\submitted[4]{{{#1},\ {\it{#2}}, submitted, {#3}\ ({#4}).}} 

\end{document}